\documentclass{amsart}

\usepackage{amsfonts,amsmath, amsthm, amssymb, latexsym, epsfig}
\usepackage[english]{babel}
\usepackage[alphabetic]{amsrefs}
\usepackage[utf8]{inputenc}
\usepackage[all]{xy}
\usepackage{xspace}
\usepackage{amsmath}
\usepackage{amscd}
\usepackage{enumitem}
\usepackage{comment}
\usepackage{setspace}
\usepackage{stmaryrd}
\usepackage{xcolor}
\usepackage{tikz-cd}
\usepackage{hyperref}
\usepackage[mathscr]{eucal}
\usepackage{mathtools}

\numberwithin{equation}{section}

\usepackage[hmargin=3cm,vmargin=3cm]{geometry}

\newcommand{\C}{{\mathbb{C}}}

\newcommand{\ctone}{{C_{\mathtt{t1}}}}

\newcommand{\M}{{\mathrm{M}}}

\newcommand{\mB}{{\mathcal{B}}}
\newcommand{\B}{{\mathrm{B}}}
\newcommand{\nf}{{2,d}}

\newcommand{\R}{{\mathbb{R}}}

\renewcommand{\l}{\left}
\renewcommand{\r}{\right}

\newcommand{\bee}{\begin{equation}}
\newcommand{\ene}{\end{equation}}
\newcommand{\bone}{\mathbf{1}}

\newcommand{\br}{\operatorname{br}}
\newcommand{\diag}{\operatorname{diag}}
\newcommand{\tr}{\operatorname{tr}}

	\newtheorem{thm}{Theorem}[section]
	\newtheorem{cor}[thm]{Corollary}
	\newtheorem{lem}[thm]{Lemma}

	\theoremstyle{definition}

	\theoremstyle{remark}
	\newtheorem{rmk}[thm]{Remark}

\title{On almost commuting matrices with respect to the normalized Hilbert--Schmidt norm}
\author{Mohit Bansil}
\address{
}
\thanks{The present work was completed while M. B. was an undergraduate student at the Department of Mathematics, Michigan State University, East Lansing, and a graduate student at University of California, Los Angeles}
\email{mbansil@g.ucla.edu}
\author{Ilya Kachkovskiy}
\address{Department of Mathematics\\ Michigan State University\\
Wells Hall, 619 Red Cedar Rd\\ East Lansing, MI\\ 48824\\ USA}
\email{ikachkov@msu.edu}
\thanks{I. K. was partially supported by the NSF grants DMS--1846114, DMS--2052519, and the 2022 Sloan Research Fellowship.}
\date{}

\begin{document}

\begin{abstract}
In this paper, we consider the Rosenthal -- Halmos problem of almost commuting matrices with respect to the normalized Hilbert -- Schmidt norm $\|\cdot\|_{2,d}=d^{-1/2}\|\cdot\|_2$. We show that if $X$ and $Y$ are self-adjoint matrices with $\|X\|\le 1$, $\|Y\|\le 1$, then there exist commuting self-adjoint matrices $X',Y'$ such that $\|X-X'\|_\nf+\|Y-Y'\|_\nf\le 5\|[X,Y]\|_{2,d}^{1/3}$, and $[X,X']=0$. Within these constraints, the exponent $1/3$ cannot be improved.
\end{abstract}

\maketitle

\section{Introduction and main results}
The topic of almost commuting matrices dates back to Rosenthal \cite{Rosenthal} and Halmos \cite{Halmos}, with the original question that can be traced back to von Neumann \cite{Neumann,Neumann_translation}.
Suppose, $X$ and $Y$ are two self-adjoint matrices such their commutator $[X,Y]$ is small in some way. Can one find commuting matrices $X',Y'$ that are close to $X$ and $Y$, respectively, in the same way? Many versions and generalizations of this question exist, depending on the notion of smallness and specific types of matrices, with two main paths to obtaining positive results. Abstract approaches usually involve proofs by contradiction. That is, non-existence of a dimension-independent estimate leads to an element with impossible properties in a certain $C^*$-algebra associated to sequences of matrices. Quantitative approaches, on the other hand, usually deal with the original matrices directly, in a more model-specific way, but are often still motivated by $C^*$-algebraic intuition.

In the case when smallness/closeness is understood in terms of the operator norm, a major breakthrough was obtained by Lin in \cite{Lin_main}, resulting in an abstract approach implying the the existence of a dimension-independent distance estimate to commuting pairs in terms of $\|[X,Y]\|$. The proof was later simplified in \cite{FR}. A quantitative result demonstrating existence of $X',Y'$ such that
\bee
\label{eq_exponent}
\|X-X'\|+\|Y-Y'\|\le C\|[X,Y]\|^{1/2}
\ene
was obtained much later in \cite{KS} (see also \cite{H_orig,Herrera}), where the exponent $1/2$ cannot be improved further.

The original question of Rosenthal was posed in terms of the Hilbert -- Schmidt norm. For the purpose of dimension-independent estimates, one naturally arrives to {\it normalized Hilbert--Schmidt norm}
\bee
\label{eq_nf_def}
\|X\|_{\nf}^2:=\frac{1}{d}\sum_{i,j=1}^d |A_{ij}|^2,\quad X\in \M_d.
\ene
The first result that matches Lin's theorem in this setting was obtained in \cite{H0} (see also \cite{H1,H2,H3}), in a bigger generality that allows to deal with an arbitrary finite number of almost commuting matrices. An independent proof for two matrices was obtained in \cite{FS}. In both cases, the proofs are of an abstract existence-only kind. The first quantitative result involving the norm \eqref{eq_nf_def} was obtained in \cite{Gl}, providing an analogue of \eqref{eq_exponent} with the exponent $1/6$ in the right hand side. The exponent was later improved to $1/4$ in \cite{FS} (see also  \cite{Said,Geisler}). In these results, the key feature is that $X'$ commutes with $X$, which allows to iterate the construction in order to extend the estimates to a larger number of almost commuting matrices with meaningful distance estimates.

In the present paper, we study the Rosenthal -- Halmos problem with respect to the norm \eqref{eq_nf_def}, under the additional constraint that $X'$ must commute with $X$, motivated by \cite{Gl,FK}. Our main result is Theorem \ref{th_main} below, which obtains a distance estimate with an exponent $1/3$. As Theorem \ref{th_lower_bound} shows, this exponent is sharp. Unlike \cite{Gl,FK}, the conclusion of Theorem \ref{th_main} cannot be immediately iterated in order to cover more than two almost commuting matrices. However, an appropriate modification is obtained in our Theorem \ref{th_main_multi1}.
\subsection{Results on two matrices}
Let $\M_d=\M_d(\C)$ be the space of $(d\times d)$-matrices with complex entries. Define the normalized Hilbert--Schmidt norm as above in \eqref{eq_nf_def}. Clearly, we have
\bee
\label{eq_norms}
d^{-1/2}\|X\|_2=\|X\|_\nf\le \|X\|,
\ene
where $\|X\|_2$ is the usual Hilbert--Schmidt (Frobenius) norm, and $\|X\|$ is the operator norm. Let also
$$
\B_d:=\{X\in \M_d(\C)\colon X=X^*,\,\|X\|\le 1\}
$$
be the unit ball in the space of self-adjoint matrices from $\M_d$, with respect to the operator norm. The following is the main result. Due the quadratic nature of the the Hilbert--Schmidt norms, it is convenient to square the corresponding distances.
\begin{thm}
\label{th_main}
For any $X,Y\in \B_d$ there exist $X',Y'\in \B_d$ such that $[X',Y']=[X',X]=0$ and
\bee
\label{eq_th_main}
\|X-X'\|_\nf^2+\|Y-Y'\|_\nf^2\le \ctone\|[X,Y]\|_\nf^{2/3}.
\ene
\end{thm}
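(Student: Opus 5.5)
Choose $X'$ as a function of $X$ and $Y'$ as a block-diagonal compression of $Y$. Diagonalize $X$ and fix a scale $\delta\in(0,1]$. Partition $[-1,1]$ into consecutive intervals $I_1,\dots,I_m$ of length about $\delta$, so $m\approx 2/\delta$. Let $P_k=\bone_{I_k}(X)$ be the spectral projections. Define
\[
X'=\sum_k \lambda_k P_k,\qquad \lambda_k\in I_k \text{ (say the midpoint)},
\]
and
\[
Y'=\sum_k P_k Y P_k .
\]
Then $X'$ is a function of $X$, so $[X',X]=0$. Also $Y'$ commutes with every $P_k$, hence with $X'$. We have $\|X'\|\le 1$. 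Since $Y'$ is a pinching of $Y$, $\|Y'\|\le\|Y\|\le 1$. Moreover $\|X-X'\|\le\delta$, so $\|X-X'\|_\nf^2\le\delta^2$.

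The main term is $\|Y-Y'\|_\nf^2=\sum_{j\ne k}\|P_jYP_k\|_\nf^2$. For $|j-k|\ge 2$ the spectra of $X$ on the two blocks are separated by at least $(|j-k|-1)\delta$. In the eigenbasis of $X$, $[X,Y]_{ab}=(x_a-x_b)Y_{ab}$, so
\[
\sum_{|j-k|\ge2}\|P_jYP_k\|_\nf^2\le \delta^{-2}\|[X,Y]\|_\nf^2 .
\]
The adjacent off-diagonal blocks $|j-k|=1$ are the obstacle, because eigenvalues near a common endpoint carry no separation.

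To handle them, average over the position of the grid. Shift the partition by a random offset $t\in[0,\delta)$, uniformly distributed. A given entry $Y_{ab}$ falls in distinct blocks with probability at most $\min(1,|x_a-x_b|/\delta)$. Hence the expected off-diagonal mass is
\[
\mathbb E\,\|Y-Y'_t\|_\nf^2\le \frac1d\sum_{a,b}|Y_{ab}|^2\min\!\Big(1,\frac{|x_a-x_b|}{\delta}\Big).
\]
Split the sum according to whether $|x_a-x_b|\le\varepsilon$ or not.

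If $|x_a-x_b|\le\varepsilon$, the weight is at most $\varepsilon/\delta$, and $\frac1d\sum_{a,b}|Y_{ab}|^2=\|Y\|_\nf^2\le1$, so this part contributes at most $\varepsilon/\delta$.

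If $|x_a-x_b|>\varepsilon$, the weight is at most $1\le|x_a-x_b|^2/\varepsilon^2$, so this part contributes at most $\varepsilon^{-2}\|[X,Y]\|_\nf^2$.

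Set $c=\|[X,Y]\|_\nf$ and optimize $\varepsilon$ in $\varepsilon/\delta+c^2/\varepsilon^2$: the choice $\varepsilon=(c^2\delta)^{1/3}$ gives $\lesssim c^{2/3}\delta^{-2/3}$. A cleaner route uses Cauchy--Schwarz directly:
\[
\frac1d\sum_{a,b}|Y_{ab}|^2\frac{|x_a-x_b|}{\delta}\le\frac1\delta\Big(\frac1d\sum|Y_{ab}|^2\Big)^{1/2}\Big(\frac1d\sum|Y_{ab}|^2|x_a-x_b|^2\Big)^{1/2}\le \frac{c}{\delta}.
\]

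Some offset $t$ achieves at most the expectation, so the total is bounded by $\delta^2+c/\delta$. The choice $\delta=c^{1/3}$ gives $2c^{2/3}$, which is exactly the claimed bound \eqref{eq_th_main} with $\ctone=2$.

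Two edge cases need checking. First, shifting the grid creates end intervals that may stick out of $[-1,1]$. Pick $\lambda_k$ inside $I_k\cap[-1,1]$ so that $\|X'\|\le1$ still holds and $|x-\lambda_k|\le\delta$ on each block. Second, if $c\ge1$ the statement is trivial: take $X'=X$ and $Y'=0$, and note $\|Y\|_\nf^2\le1\le c^{2/3}$.

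The delicate step is the averaging estimate. We need exactly $\Pr(a,b\text{ separated})\le|x_a-x_b|/\delta$, which holds because a uniformly shifted lattice of spacing $\delta$ has a cut point in an interval of length $\ell$ with probability $\min(1,\ell/\delta)$.
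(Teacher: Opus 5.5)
Your proof is correct and gives the theorem with $\ctone=2$. The case $c=0$, where $\delta=c^{1/3}$ degenerates, is trivial: take $X'=X$, $Y'=Y$.

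Its core is the same as the paper's. Both bound the off-diagonal cost by $\delta^{-1}\sum_{a,b}|x_a-x_b|\,|Y_{ab}|^2$ and then apply Cauchy--Schwarz against $\|Y\|_2\|[X,Y]\|_2$. This step is what produces the exponent; your $\varepsilon$-splitting alternative would only give a total of order $c^{1/2}$.

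The difference is in how the averaging over cut positions is organized.

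\emph{The paper.} It fixes a grid of mesh $r$, picks in each cell the breaking point minimizing the cut cost $\zeta_k$, and bounds that minimum by a weighted average with weights $\frac{2}{r}(x_{k+1}-x_k)$. For these weights to have total mass at least $1$ in every cell, it must first remove all spectral gaps of $X$ longer than $g\sim\|[X,Y]\|_\nf^{2/3}$. That is the purpose of Section 3: delete entries across large gaps (Lemma \ref{lemma_delete}), split into direct sums (Lemma \ref{lemma_direct_sum}), and induct. This raises the constant from $8$ to $24$.

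\emph{Your route.} The uniformly random offset of the whole grid is the continuous, global version of the same weights. A pair is separated with probability exactly $\min(1,|x_a-x_b|/\delta)$ however the spectrum is distributed. So large gaps need no special treatment, and Section 3 becomes unnecessary. Also, your blocks are exactly the grid cells, so $\|X-X'\|\le\delta$ is immediate. In the paper, a block $(b_{j-1},b_j]$ may straddle two adjacent cells.

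What the paper's route buys is a deterministic, cell-by-cell choice inside the breaking-point framework of Lemma \ref{lemma_reduction}, which is reused for the lower bound. Your choice is just as explicit, though: at most $d+1$ offsets give distinct partitions, and the best of them beats the average. Your argument also carries over directly to Theorem \ref{th_main_multi1} by summing the expectation over the matrices $Y_q$.
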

In order to better follow the structure of the proof, we believe it is convenient to carry $\ctone$ through various intermediate steps. However, an inspection of the proof shows that one can take $\ctone=24$ which, after rounding, leads to the constant $5$ in the abstract. We note that no specific optimization efforts were made.

We also show that the estimate \eqref{eq_th_main} is order sharp, as follows.
\begin{thm}
\label{th_lower_bound}
Let
\bee
\label{eq_th_lower_bound}
X_0:= \frac{1}{d} 
\begin{pmatrix}
1 &  &  &  \\
& 2 &  &  \\
&  & \ddots &  \\
&  &  & d
\end{pmatrix}
, \quad
Y_0 := 
\begin{pmatrix}
0 & 1 &  &  \\
1& 0 & \ddots &  \\
& \ddots & \ddots & 1 \\
&  & 1 & 0
\end{pmatrix}\in \B_d.
\ene
For every $X',Y'\in \B_d$ satisfying $[X',Y']=[X',X_0]=0$, we have
$$
\|X_0-X'\|_\nf^2+\|Y_0-Y'\|_\nf^2\ge \frac{1}{96} \|[X,Y]\|_\nf^{2/3}.
$$
\end{thm}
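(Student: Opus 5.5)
The plan is to use the rigidity that the constraint $[X',X_0]=0$ imposes, and then reduce the claim to an elementary one-parameter optimization. Since $X_0$ is diagonal with distinct eigenvalues $1/d,\dots,d/d$, any $X'$ commuting with $X_0$ is itself diagonal, say $X'=\diag(x_1,\dots,x_d)$ with $x_i\in[-1,1]$. For diagonal $X'$, the relation $[X',Y']=0$ reads $(x_i-x_j)Y'_{ij}=0$, so $Y'_{ij}=0$ whenever $x_i\ne x_j$. For the commutator itself, $[X_0,Y_0]_{ij}=\frac{i-j}{d}(Y_0)_{ij}$, which is $\pm 1/d$ on the two off-diagonals. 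Hence $\|[X_0,Y_0]\|_\nf^2=\frac{2(d-1)}{d^3}\le \frac{2}{d^2}$, and the right-hand side is at most $\frac{2^{1/3}}{96}d^{-2/3}$. It therefore suffices to show that the left-hand side is $\gtrsim d^{-2/3}$ with a good enough constant.

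Let $k$ be the number of indices $i\in\{1,\dots,d-1\}$ with $x_i\ne x_{i+1}$. For each such $i$ we have $Y'_{i,i+1}=Y'_{i+1,i}=0$, while $(Y_0)_{i,i+1}=(Y_0)_{i+1,i}=1$. This gives
$$
\|Y_0-Y'\|_\nf^2\ge \frac{2k}{d}.
$$
The remaining indices split $\{1,\dots,d\}$ into $k+1$ maximal runs of consecutive integers on which $x$ is constant; call their lengths $L_1,\dots,L_{k+1}$, with $\sum L_j=d$. On a run of length $L$ the diagonal of $X_0$ takes the values $m/d$ for $L$ consecutive integers $m$. The best constant approximation is the mean, which gives the bound $\sum(i/d-c)^2\ge \frac{L^3-L}{12d^2}$. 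Summing over runs and using convexity, $\sum L_j^3\ge d^3/(k+1)^2$, we get
$$
\|X_0-X'\|_\nf^2\ge \frac{1}{12d^3}\Big(\frac{d^3}{(k+1)^2}-d\Big)=\frac{1}{12(k+1)^2}-\frac{1}{12d^2}.
$$

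The final step, which I expect to be the only place requiring care, is to minimize
$$
F(k)=\frac{2k}{d}+\frac{1}{12(k+1)^2}-\frac1{12d^2}
$$
over integers $0\le k\le d-1$ and compare the result with $\frac{2^{1/3}}{96}d^{-2/3}$. I would split into two cases.
\begin{enumerate}
\item If $k+1\ge c\,d^{1/3}$ for a suitable constant $c$, then already the first term gives $\frac{2k}{d}\gtrsim d^{-2/3}$.
\item If $k+1< c\,d^{1/3}$, then the second term gives $\frac{1}{12(k+1)^2}\ge \frac{1}{12c^2}d^{-2/3}$, which dominates the error term $\frac{1}{12d^2}$ once $d$ is not tiny.
\end{enumerate}
Choosing, say, $c=1$ gives a constant comfortably above $2^{1/3}/96$ for $d\ge 2$. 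For very small $d$, including $k=0$, I would check directly: $k=0$ forces $X'$ to be scalar, which costs about $\frac{1}{12}$, and the case $d=1$ is trivial because the commutator vanishes. Tracking the constants so that $1/96$ is indeed reached in all ranges is the main bookkeeping obstacle; the structure of the argument itself is straightforward.
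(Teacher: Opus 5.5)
Your proposal is correct and follows essentially the same route as the paper. You bound $\|Y_0-Y'\|_\nf^2$ below by the number of cut edges of the path $Y_0$, bound $\|X_0-X'\|_\nf^2$ below by within-block variances plus the power-mean inequality, and balance the two at roughly $d^{1/3}$ blocks.

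Your bookkeeping is in fact slightly more careful than the paper's in two places:
\begin{enumerate}
\item Working with maximal runs covers diagonal $X'$ whose entries are not sorted. The paper's breaking-point notation tacitly assumes $t_1\le\dots\le t_d$, which is not automatic for the lower bound.
\item Using the exact variance $(L^3-L)/12$ together with the $-\frac{1}{12d^2}$ correction handles blocks of length one. For such blocks the paper's per-block bound $L^3/(48d^2)$ does not hold.
\end{enumerate}
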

\subsection{Three or more matrices}The complexity of operator norm version of the Rosenthal -- Halmos problem increases drastically, once one departs from the case of two self-adjoint matrices. Indeed, topological obstructions appear in the case of two unitary matrices \cite{Voiculescu,Eilers1,Gong_Lin,DHK} as well as in the case of three self-adjoint matrices \cite{Choi,HL}, and the situation is only expected to become harder in more involved cases. On the other hand, as first observed in \cite{H0}, many of these obstructions do not appear in the case of the normalized Hilbert -- Schmidt norm (however, we refer the reader to \cite{Atkinson,Ioana} for some more delicate cases). As was noted in \cite[Theorem 3]{Gl} (see also \cite[Theorem 3]{FK}), the constructions of \cite{Gl} and \cite{FK} can be iterated in order to obtain quantitative bounds for arbitrary number of mutually almost commuting matrices.

The above iterations rely on the fact that the operator $X'$ in \cite{Gl,FK} does not depend on $Y$, which does not apply directly to our case of Theorem \ref{th_main}. However, one can observe by inspecting the proof of Theorem \ref{th_main} that the discrete optimization part of the proof can be modified in order to treat a $k$-tuple of matrices $Y$ instead of a single one, as follows.
\begin{thm}
\label{th_main_multi1}
For any $X,Y_1,\ldots,Y_N\in \B_d$, there exist $X',Y_1',\ldots,Y_N'\in \B_d$ such that
$$
[X',Y_1']=\ldots=[X',Y_N']=[X',X]=0,
$$
and
$$
\|X-X'\|_\nf^2+\|Y_1-Y_1'\|_\nf^2+\ldots+\|Y_N-Y_N'\|_\nf^2\le 24\sqrt{N}\l(\|[X,Y_1]\|^2_\nf+\ldots+\|[X,Y_N]\|^2_\nf\r)^{1/3}.
$$
\end{thm}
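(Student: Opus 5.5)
We diagonalize $X$ and replace it by a coarse-grained version. Then we take each $Y_k'$ to be the pinching of $Y_k$ to the spectral subspaces of $X'$. The grid defining the coarse-graining is chosen by averaging over a random shift, so that eigenvalue pairs straddling a grid point are rare. Since $X'$ is not built from the $Y_k$, this is the same discrete optimization as in the proof of Theorem \ref{th_main}, now with the off-block costs of all $Y_k$ added together.

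\emph{Setup.} Choose an orthonormal basis in which $X=\diag(\lambda_1,\ldots,\lambda_d)$, with $\lambda_i\in[-1,1]$. Then $([X,Y_k])_{ij}=(\lambda_i-\lambda_j)(Y_k)_{ij}$. Fix a mesh $h\in(0,2]$ and a shift $t\in[0,h)$, and cut $\R$ into the intervals $I_m=[t+mh,t+(m+1)h)$.
\begin{itemize}
\item Let $X'$ act on $\operatorname{span}\{e_i\colon\lambda_i\in I_m\}$ as multiplication by the center of $I_m$, clamped to $[-1,1]$.
\item Let $Y_k'=\sum_m P_mY_kP_m$, where $P_m$ is the corresponding spectral projection.
\end{itemize}
Then $X'$ is a function of $X$, so $[X',X]=0$. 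Each $Y_k'$ is block diagonal, so $[X',Y_k']=0$. Pinching is a contraction, so $Y_k'\in\B_d$, and clamping keeps $X'\in\B_d$. The costs are
\[
\|X-X'\|_\nf^2\le h^2/4,\qquad \|Y_k-Y_k'\|_\nf^2=\frac1d\sum_{i,j\ \text{in different blocks}}|(Y_k)_{ij}|^2 .
\]

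\emph{Averaging over the shift.} Take $t$ uniform in $[0,h)$. For fixed $i,j$, the probability that $\lambda_i$ and $\lambda_j$ fall in different intervals is at most $|\lambda_i-\lambda_j|/h$. By Cauchy--Schwarz,
\[
\frac1d\sum_{i,j}|(Y_k)_{ij}|^2|\lambda_i-\lambda_j|\le \|Y_k\|_\nf\,\|[X,Y_k]\|_\nf\le \|[X,Y_k]\|_\nf .
\]
Hence $\mathbb E\,\|Y_k-Y_k'\|_\nf^2\le \|[X,Y_k]\|_\nf/h$. Summing over $k$ and applying Cauchy--Schwarz in $k$, with $S:=\bigl(\sum_k\|[X,Y_k]\|_\nf^2\bigr)^{1/2}$, gives
\[
\mathbb E\Bigl(\|X-X'\|_\nf^2+\sum_k\|Y_k-Y_k'\|_\nf^2\Bigr)\le \frac{h^2}{4}+\frac{\sqrt N\, S}{h}.
\]
Some shift $t$ therefore achieves at most this bound.

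\emph{Choosing $h$.} Take $h=S^{1/3}$. The right-hand side becomes $(1/4+\sqrt N)S^{2/3}\le 2\sqrt N S^{2/3}$, which is well within $24\sqrt N S^{2/3}$.

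\emph{Large commutator.} The choice $h=S^{1/3}$ is admissible only when $S\le 8$. If $S>8$, take $X'=Y_k'=0$ instead. The cost is then at most $1+N\le 2N$. Since $\|[X,Y_k]\|_\nf\le 2$, we have $S\le 2\sqrt N$, and so $S>8$ forces $N>16$. Then $24\sqrt N S^{2/3}\ge 24\cdot 4\sqrt N=96\sqrt N$, which is at least $2N$ once $\sqrt N\le 48$. Closing the remaining range $N>48^2$ needs a second argument, not yet found. One route is to check whether the clamped construction with $h$ capped at $2$ still gives an admissible bound there. The fallback is the cruder uniform bound $\|X-X'\|_\nf^2+\sum_k\|Y_k-Y_k'\|_\nf^2\le 1+N$.

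\emph{Main obstacle.} The key step is the probabilistic estimate on pairs straddling a cut. It has to be weighted by $|\lambda_i-\lambda_j|$ rather than $|\lambda_i-\lambda_j|^2$, so that the Cauchy--Schwarz step produces the $h^{-1}$ scaling behind the exponent $1/3$.
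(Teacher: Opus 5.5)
Your argument is correct, and the case you leave open at the end is not a real gap: nothing in the construction uses $h\le 2$. For any $h>0$ and any shift $t$, the clamping map $\pi\colon\R\to[-1,1]$ is $1$-Lipschitz and fixes every eigenvalue $\lambda_i\in[-1,1]$. Hence $|\lambda_i-\pi(c_m)|\le|\lambda_i-c_m|\le h/2$, and $\|X-X'\|_{2,d}^2\le h^2/4$ still holds. Likewise, $\lambda_i<\lambda_j$ are separated exactly when $t+h\mathbb{Z}$ meets $(\lambda_i,\lambda_j]$, and the expected number of points of $t+h\mathbb{Z}$ in that interval is $(\lambda_j-\lambda_i)/h$ for every $h>0$. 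So the bound $h^2/4+\sqrt{N}S/h$ is valid for all $h>0$. Taking $h=S^{1/3}$ gives $(\tfrac14+\sqrt N)S^{2/3}\le 24\sqrt N S^{2/3}$ for every $S>0$; for $S=0$ take $X'=X$, $Y_k'=Y_k$. You can simply delete the ``large commutator'' paragraph, with its unresolved range $N>48^2$. Your opening remark that $X'$ is not built from the $Y_k$ is inaccurate: the shift $t$ is selected using the $Y_k$, just as the paper's breaking points depend on $Y$. Nothing in the proof relies on that remark.

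Your route differs from the paper's in a useful way. The paper uses a fixed grid of mesh $r\sim S^{1/3}$ and places exactly one breaking point in each cell, at the index minimizing the cut cost $\zeta_b$. It bounds that minimum by the average with weights $2(x_{k+1}-x_k)/r$, and after summation reaches the same weighted sum $\sum_{i<j}(x_j-x_i)|Y_{ij}|^2$ that your separation probability produces directly. Because the cut must sit at an index inside a given cell, those weights add up to at least $1$ only if $X$ has no spectral gap longer than about $S^{2/3}$. This is why the paper needs Lemma~\ref{lemma_delete}, Lemma~\ref{lemma_direct_sum}, the induction in Corollary~\ref{cor_gaps}, and the reduction to simple spectrum. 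Your random shift is the continuous version of that averaging: the cut may fall anywhere, including inside a long gap, so there is no deficit at cell boundaries. The whole gap-reduction step disappears, as does the need for the optimal block averages of Lemma~\ref{lemma_reduction}, and you get the better constant $\tfrac14+\sqrt N$. The paper's route buys an explicit deterministic selection rule inside the framework of Lemma~\ref{lemma_reduction}, which it reuses for the lower bound. Your argument derandomizes just as easily, though, since the total cost is a step function of $t$ with finitely many values.
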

As mentioned above, the proof of Theorem \ref{th_main_multi1} follows the proof of Theorem \ref{th_main}. In order to avoid both unnecessary repetitions and over-generalizing the main result, we explain the relevant modifications after each step of the proof of Theorem \ref{th_main}, in Remarks \ref{rem_optimization_multiple}, \ref{rem_dimension_multiple}, and \ref{rem_main_multiple}.

Similarly to \cite{Gl,FK}, the reader can now iterate Theorem \ref{th_main_multi1} for the matrices $Y_1',\ldots,Y_N'$ restricted into the eigenspaces of $X$, without violating commutation with $X$. 
\begin{cor}
\label{cor_main_multi2} For any $X_1,\ldots,X_N\in \B_d$, there exist commuting matrices $X_1',\ldots,X_N'\in \B_d$ such that
\bee
\label{eq_inequality_multiple}
\|X-X_1'\|_\nf^2+\ldots+\|X_N-X_N'\|_\nf^2\le C(N)\max_{i,j}\l(\|[X_i,X_j]\|_\nf\r)^{\frac{2}{3^{N-1}}},
\ene
where $C(N)$ only depends on $N$.
\end{cor}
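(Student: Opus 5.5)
We prove Corollary \ref{cor_main_multi2} by induction on $N$, applying Theorem \ref{th_main_multi1} once at each step. For $N=1$ there is nothing to prove (take $X_1'=X_1$). For $N=2$ the statement is Theorem \ref{th_main} with exponent $2/3$.

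For the inductive step, set $\varepsilon:=\max_{i,j}\|[X_i,X_j]\|_\nf\le 2$. We apply Theorem \ref{th_main_multi1} with $X=X_1$ and $Y_k=X_{k+1}$, $k=1,\ldots,N-1$. This gives $X_1',Y_1',\ldots,Y_{N-1}'\in\B_d$ with the following properties:
\begin{itemize}
\item $X_1'$ commutes with $X_1$ and with every $Y_k'$;
\item the total squared distance is at most $24\sqrt{N-1}\,((N-1)\varepsilon^2)^{1/3}\lesssim_N \varepsilon^{2/3}$.
\end{itemize}
The key observation is that commutation with $X_1'$ survives the later steps. We compress each $Y_k'$ to the spectral subspaces of $X_1'$; it already commutes with $X_1'$, so this does nothing. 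Let $E_1,\ldots,E_m$ be the spectral projections of $X_1'$, and write $Y_k'=\bigoplus_j Y_{k,j}'$ with $Y_{k,j}'\in\B_{d_j}$. On each block we apply the induction hypothesis (the $(N-1)$-matrix statement) to $Y_{1,j}',\ldots,Y_{N-1,j}'$. The resulting commuting matrices $Z_{k,j}$ are assembled into $X_k'':=\bigoplus_j Z_{k-1,j}$. Each $X_k''$ lies in $\B_d$, commutes with the others, and is block diagonal with respect to the $E_j$, hence commutes with $X_1'$. So $X_1',X_2'',\ldots,X_N''$ is a commuting tuple.

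\textbf{Estimate.} We first bound the commutators of the new tuple:
$$\|[Y_k',Y_l']\|_\nf\le \|[X_{k+1},X_{l+1}]\|_\nf+2\|Y_k'-X_{k+1}\|_\nf+2\|Y_l'-X_{l+1}\|_\nf\lesssim_N \varepsilon^{1/3}.$$
This is the step where the exponent degrades by a factor of $3$. It uses $\|AB\|_\nf\le\|A\|\|B\|_\nf$ and $\varepsilon\le\varepsilon^{1/3}$ for $\varepsilon\le 2$ (up to constants).

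Next we pass to the blocks. The normalized norm is an average over blocks with weights $d_j/d$, namely $\|A\|_\nf^2=\sum_j \frac{d_j}{d}\|A_j\|_{2,d_j}^2$. The inductive bound on each block has the form $C(N-1)\max_{k,l}\|[Y_{k,j}',Y_{l,j}']\|_{2,d_j}^{2/3^{N-2}}$.

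\textbf{Main obstacle.} The maximum inside the inductive bound is taken per block and then averaged. We handle this in two moves:
\begin{itemize}
\item Replace the max by a sum over pairs $(k,l)$, at the cost of a constant depending on $N$.
\item Apply Jensen/Hölder to the concave function $t\mapsto t^{1/3^{N-2}}$ with respect to the weights $d_j/d$. This gives
$$\sum_j \frac{d_j}{d}\|C_j\|_{2,d_j}^{2/3^{N-2}}\le\Big(\sum_j\frac{d_j}{d}\|C_j\|_{2,d_j}^2\Big)^{1/3^{N-2}}=\|C\|_\nf^{2/3^{N-2}}.$$
\end{itemize}
If one prefers, one can instead formulate the induction hypothesis with the sum of squared commutators in place of the max, and the same argument goes through. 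Combining this with the commutator bound above, the second stage costs $\lesssim_N(\varepsilon^{1/3})^{2/3^{N-2}}=\varepsilon^{2/3^{N-1}}$.

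Finally, by the triangle inequality $\|a+b\|^2\le 2\|a\|^2+2\|b\|^2$, the total squared distance is at most
$$\lesssim_N \varepsilon^{2/3}+\varepsilon^{2/3^{N-1}}\lesssim_N\varepsilon^{2/3^{N-1}},$$
using $\varepsilon\le 2$. This completes the induction.
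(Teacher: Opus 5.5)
Your proof is correct and is the iteration the paper has in mind. You apply Theorem \ref{th_main_multi1} with $X=X_1$ and $Y_k=X_{k+1}$, then apply the $(N-1)$-matrix statement on the eigenspaces of $X_1'$, bound the new commutators by the $\|\cdot\|_\nf$ perturbation estimate, and return from the blocks to the whole space via Jensen with weights $d_j/d$. This supplies the details the paper leaves to the reader, and you correctly compress to the eigenspaces of $X_1'$ rather than of $X_1$, which is what the paper's one-line sketch literally says.
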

While \eqref{eq_inequality_multiple} improves on the previously known results, we do not expect it to be optimal.
\subsection{Structure of the paper} In Section 2 we show that, if one additionally prescribes the multiplicities of the eigenvalues of $X'$, then the of $X'$ and $Y'$ can be obtained by an explicitly solvable optimization problem, thus reducing the original question to optimizing the choice of said multiplicities, which is a discrete optimization problem.

In Section 3 we discuss two technical lemmas, which collectively conclude that, without loss of generality one can assume that $X$ has no large spectral gaps.

In Section 4 we prove the main result, Theorem \ref{th_main}. The main idea is based on carefully estimating a particularly chosen weighted average of the solution candidates to the discrete optimization problem, thus providing an upper bound on the actual minimizer.

In Section 5, we use the ideas obtained from Section 2 to prove Theorem \ref{th_lower_bound}, which provides a lower bound on the exponent in Theorem \ref{th_main}.

As mentioned in the previous subsections, the proof of Theorem \ref{th_main_multi1} is contained in Remarks \ref{rem_optimization_multiple}, \ref{rem_dimension_multiple}, and \ref{rem_main_multiple}
With all the preparations, the proof of Corollary \ref{cor_main_multi2} is a straightforward consequence of the iteration argument described in the same subsection. We refer the reader to \cite{FK} for additional details.

\section{An optimization solution}
Since $X'$ commutes with $X$, they can be jointly diagonalized by a unitary conjugation. As the conjugation preserves all norms under consideration, {\it one can assume without loss of generality that both $X$ and $X'$ are diagonal}. Additionally, by approximation, one can also assume that all eigenvalues of $X$ are distinct. Therefore, let us denote
$$
\B^{\diag}_d:=\{X\in \M_d(\C)\colon X=X^*,\,\|X\|\le 1,\,\,X\,\text{is diagonal with simple spectrum}\}.
$$
Let also
$$
X'=\diag\{t_1,\ldots,t_d\},\quad -1\le t_1\le\ldots\le t_d\le 1.
$$
Unlike $X$, the eigenvalues of $X'$ will in general not be simple. Indeed, otherwise $Y'$ would also have to be diagonal, which is too wasteful for the purpose of minimizing $\|Y-Y'\|_\nf$. It is convenient to split the set of all $X'$ under consideration into classes based on the eigenvalue multiplicities. More precisely, if
$$
t_1=\ldots=t_{b_1}<t_{b_1+1}=\ldots=t_{b_2}<\ldots<t_{b_{\ell-1}+1}=\ldots=t_{b_\ell},\quad b_\ell=d,
$$
then we will call $b_1,\ldots,b_\ell$ the {\it breaking points} of $X'$, denoted by $\br(X')=(b_1,\ldots,b_\ell)$. Clearly, $t_{b_1},\ldots,t_{b_\ell}$ are distinct eigenvalues of $X'$, with multiplicities $b_1-b_0,\ldots,b_\ell-b_{\ell-1}$, respectively, where it is assumed for convenience that $b_0=0$. Let also $P_j$ be the spectral projection of $X'$ associated to the $j$-th eigenvalue with the eigenspace $\ker(X'-t_{b_j}\bone)$.

Denote by $F$ the left hand side of \eqref{eq_th_main}, considered as a functional of $X'$ and $Y'$:
\bee
\label{eq_F_def}
F(X',Y'):=\|X-X'\|_2^2+\|Y-Y'\|_2^2.
\ene
The following lemma states that, assuming that the breaking points of $X'$ are given, the matrices $X'$ and $Y'$ minimizing \eqref{eq_F_def} can be explicitly found.
\begin{lem}
\label{lemma_reduction}
Fix $X\in \B_d^{\diag}$, $Y\in \B_d$, and a sequence of breaking points $\mB=(b_1,\ldots,b_\ell)$. Then, the functional $\eqref{eq_F_def}$ has a unique minimizer
$$
(X'_\mB,Y'_\mB)=\arg\min \l\{F(X',Y')\colon X'\in \B_d^{\diag},\,\br(X')=\mB;\,Y'\in \B_d\,;\,[X',Y']=0\r\}
$$
given by
\bee
\label{eq_minimizer_x}
X_\mB'=\sum_{j=1}^{\ell}\frac{1}{b_{j}-b_{j-1}}\tr(P_j X P_j)P_j;
\ene
\bee
\label{eq_minimizer_y}
Y'_\mB=\sum_{j=1}^\ell P_j Y P_j.
\ene
\end{lem}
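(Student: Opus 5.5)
Once the breaking points $\mB=(b_1,\ldots,b_\ell)$ are fixed and $X'$ is diagonal with nondecreasing diagonal, the projections $P_j$ are determined by $\mB$ alone: $P_j$ is the coordinate projection onto $\operatorname{span}\{e_{b_{j-1}+1},\ldots,e_{b_j}\}$. The plan is to use this to split the minimization of $F(X',Y')$ into two independent problems, one in the eigenvalues $s_j=t_{b_j}$ of $X'$ and one in $Y'$, and to solve each by an orthogonal projection in the Hilbert--Schmidt inner product.

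First, the constraint on $Y'$. Since the $s_j$ are distinct, $[X',Y']=0$ holds if and only if $Y'$ preserves each eigenspace of $X'$, i.e. $Y'=\sum_j P_jY'P_j$. So $Y'$ ranges over the block-diagonal self-adjoint matrices $\mathcal{D}_\mB$ with $\|Y'\|\le1$. The map $E(Z)=\sum_j P_jZP_j$ is the Hilbert--Schmidt orthogonal projection onto the block-diagonal matrices, because $Z-E(Z)$ is supported off the diagonal blocks. By Pythagoras,
\[
\|Y-Y'\|_2^2=\|Y-E(Y)\|_2^2+\|E(Y)-Y'\|_2^2,
\]
so the unique minimizer over all block-diagonal matrices is $E(Y)$. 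It remains to check that $E(Y)$ is admissible. It is self-adjoint, and $E(Y)$ is a pinching, so $\|E(Y)\|=\max_j\|P_jYP_j\|\le\|Y\|\le1$. Hence the norm constraint does not bind, and $Y'_\mB=E(Y)$, which is \eqref{eq_minimizer_y}.

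Second, the $X'$ part. Since $X=\diag\{x_1,\ldots,x_d\}$, we have
\[
\|X-X'\|_2^2=\sum_{j}\sum_{i=b_{j-1}+1}^{b_j}(x_i-s_j)^2,
\]
and each block is a separate one-variable quadratic in $s_j$. Its minimizer is the block mean $\bar x_j=\frac{1}{b_j-b_{j-1}}\tr(P_jXP_j)$, which lies in $[-1,1]$. This gives \eqref{eq_minimizer_x}, and uniqueness follows from strict convexity.

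The main obstacle is admissibility of the $X'$ minimizer: we need $\bar x_1<\cdots<\bar x_\ell$ strictly, so that $\br(X'_\mB)=\mB$ and the $s_j$ are distinct. This would not hold for arbitrary ordering, which is why the ordering convention matters. I would use the normalization, permitted by the reduction above, that $X$ is diagonal with increasing entries $x_1<\cdots<x_d$ (simple spectrum, ordered compatibly with $X'$). Then consecutive blocks of indices consist of strictly increasing values, so their means are strictly increasing. If one did not fix this ordering, the feasible set with distinct $s_j$ would not be closed, and the minimizer would have to be interpreted as an infimum attained on the closure, possibly with merged blocks. I would therefore state explicitly that the diagonalization is chosen so that $x_i$ is increasing. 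Strict convexity of $F$ in $(s,Y')$ then gives uniqueness of the minimizing pair.
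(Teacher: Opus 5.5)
Your proof is correct and follows the paper's argument. The commutation constraint forces $Y'$ to be block diagonal with respect to the $P_j$, so the optimal $Y'$ is the pinching $\sum_j P_jYP_j$; the $X'$ part then splits into one-variable quadratics, each minimized by the block mean. Your extra checks fill in details the paper leaves implicit. These are that $\|\sum_j P_jYP_j\|\le\|Y\|\le 1$, and that the ordering $x_1<\dots<x_d$ is needed (distinctness alone, as the paper's proof phrases it, does not suffice) for the block means to be strictly increasing, so that $\br(X'_\mB)=\mB$.
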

\begin{proof}
Note that $P_j$ only depends on $\mB$, and is therefore fixed within the considered range of $X',Y'$. Clearly, $Y'$ commutes with $X'$ if and only if it commutes with every $P_j$. The latter is equivalent to $P_i Y' P_j=0$ for all $i\neq j$. The vanishing of the associated matrix elements of $Y'$ leads to
$$
\|Y-Y'\|_2^2\ge \sum_{i,j\colon i\neq j}\|P_i Y P_j\|^2_2.
$$
Clearly, the equality is attained on $Y'=Y'_\mB$ defined in \eqref{eq_minimizer_y}.

Recall that, for $y_1,\ldots,y_k\in \R$, we have
\bee
\label{eq_quadratic_min}
\arg\min\l\{(y_1-t)^2+\ldots+(y_k-t)^2\colon t\in \R\r\}=\frac{y_1+\ldots+y_k}{k}.
\ene
Since $\mB$ is fixed, the matrix $X'$ is completely determined by $t_{b_1},\ldots,t_{b_\ell}$, and one can rewrite the functional \eqref{eq_F_def} as
$$
F(X',Y')=\sum_{i=1}^{\ell}\sum_{j=b_{i-1}+1}^{b_i}(x_j-t_{b_i})^2+\|Y-Y'_\mB\|^2_2,
$$
and one can now obtain \eqref{eq_minimizer_x} as a consequence of \eqref{eq_quadratic_min} applied with $t=t_{b_i}$ separately for each $i$. Note that the assumption of all $x_j$ being distinct implies that the corresponding minimizers $\lambda_j$ will also be distinct, thus satisfying the assumption $\br(X'_\mB)=\mB$.
\end{proof}
\begin{rmk}
\label{rem_optimization_multiple}
The conclusion of Lemma \ref{lemma_reduction} carries to the setting of Theorem \ref{th_main_multi1} verbatim, applied to $Y=Y_j$ for each $j$ independently.
\end{rmk}
\section{Dimension reductions}
In this section, we will describe two cases in which the problem can be reduced to that in a smaller dimension. The first lemma is, essentially a restatement of the fact that $\|\cdot\|_\nf$ ``scales correctly'' with the dimension for the purpose of the question of almost commuting matrices.
\begin{lem}
\label{lemma_direct_sum}
Suppose that $X=X_-\oplus X_+$, $Y=Y_-\oplus Y_+$ are decomposable with respect to some orthogonal decomposition of the original space, and $(X_-',Y_-')$, $(X_+',Y_+')$ satisfy the conclusions of Theorem $\ref{th_main}$ on the corresponding subspaces. Then $X'=X_-'\oplus X_+'$ and $Y'=Y_-'\oplus Y_-'$ satisfy the same conclusion for the original matrices $X,Y$.
\end{lem}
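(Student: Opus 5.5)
The plan is to check the three parts of the conclusion of Theorem \ref{th_main} for the direct sums separately: membership in $\B_d$, the commutation relations, and the distance estimate. (The statement's ``$Y_-'\oplus Y_-'$'' is read as the intended $Y_-'\oplus Y_+'$.) Write the ambient space as $H=H_-\oplus H_+$ with $\dim H_\pm=d_\pm$ and $d=d_-+d_+$.

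Membership and commutation are immediate because everything is block diagonal. The operator norm of a direct sum is the maximum of the norms of the summands, so $X',Y'\in\B_d$. Commutators of block-diagonal operators are computed blockwise, so $[X',Y']=[X_-',Y_-']\oplus[X_+',Y_+']=0$ and $[X',X]=0$. If one insists on the normalization $X\in\B_d^{\diag}$ of Section 2, the conclusion is unitarily invariant, so this causes no trouble.

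For the distance estimate, pass to unnormalized Hilbert--Schmidt norms, which are additive over orthogonal direct sums:
\[
d\bigl(\|X-X'\|_\nf^2+\|Y-Y'\|_\nf^2\bigr)=\sum_{\pm} d_\pm\bigl(\|X_\pm-X_\pm'\|_{2,d_\pm}^2+\|Y_\pm-Y_\pm'\|_{2,d_\pm}^2\bigr)\le \ctone\sum_\pm d_\pm\|[X_\pm,Y_\pm]\|_{2,d_\pm}^{2/3}.
\]
Dividing by $d$ leaves the weighted average $\sum_\pm \frac{d_\pm}{d}\,a_\pm^{1/3}$, where $a_\pm=\|[X_\pm,Y_\pm]\|_{2,d_\pm}^2$. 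Since $t\mapsto t^{1/3}$ is concave, Jensen's inequality gives
\[
\sum_\pm \frac{d_\pm}{d}\,a_\pm^{1/3}\le\Bigl(\sum_\pm\frac{d_\pm}{d}\,a_\pm\Bigr)^{1/3}.
\]
Additivity again yields $\sum_\pm \frac{d_\pm}{d}a_\pm=\frac1d\|[X,Y]\|_2^2=\|[X,Y]\|_\nf^2$, so the right-hand side equals $\|[X,Y]\|_\nf^{2/3}$. This is exactly \eqref{eq_th_main}.

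The only substantive step is this Jensen step. It works precisely because the exponent $2/3$ applied to the normalized norm becomes $1/3$ applied to a quantity that is additive with weights $d_\pm/d$. The same argument extends to any finite number of summands, and to the multi-matrix bound in Theorem \ref{th_main_multi1}, since there the right-hand side is also a concave power of a sum of squared normalized norms.
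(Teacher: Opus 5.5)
Your proof is correct and follows the paper's argument: pass to unnormalized Hilbert--Schmidt norms, use additivity over the direct sum, apply the estimates on the two summands, and combine them with a convexity inequality. Your Jensen step for $t\mapsto t^{1/3}$ with weights $d_\pm/d$ is the same inequality as the paper's H\"older step with exponents $3/2$ and $3$. You also check membership in $\B_d$ and the commutation relations, and read the typo $Y_-'\oplus Y_-'$ as $Y_-'\oplus Y_+'$; the paper leaves these points implicit.
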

\begin{proof} Let $d_1+d_2=d$ be the dimensions of the corresponding spaces. From the conclusion of Theorem \ref{th_main} applied to the orthogonal summands, the relation \eqref{eq_norms} between the norms, and H\"older's inequality, we have
\begin{multline}
\label{eq_inqualities_directsum}
\|X-X'\|^2_2+\|Y-Y'\|^2_2=\|X_--X_-'\|^2_2+\|Y_--Y_-'\|^2_2+\|X_+-X_+'\|^2_2+\|Y_+-Y_+'\|^2_2\le \\ \le C_{\mathrm{t1}}\l(d_1^{2/3}\|[X_-',Y_-']\|_2^{2/3}+d_2^{2/3}\|[X_-',Y_-']\|^{2/3}_2\r)\le \\ \le C_{\mathrm{t1}}\l(d_1^{\frac23\cdot\frac32}+d_2^{\frac23\cdot\frac32}\r)^{2/3}\l(\|[X_-',Y_-']\|_2^2+\|[X_+',Y_+']\|_2^2\r)^{1/3}=C_{\mathrm{t1}}d^{2/3}\|[X,Y]\|^{2/3}_2,
\end{multline}
and the proof can be completed after multiplying both sides by $d^{-1}$ and using \eqref{eq_norms}.
\end{proof}
The second property is somewhat more subtle and indicates that certain matrix elements of $Y$ that are away from the diagonal can be assumed to vanish without loss of generality.
\begin{lem}
\label{lemma_delete}
Under the notation from Lemma $\ref{lemma_reduction}$, suppose that for some indices $i,j$ we have
\bee
\label{eq_lemma_delete}
\|[X,Y]\|_2^2\le \frac{C_{\mathrm{t1}}^{3/2}d}{6^{3/2}}|x_i-x_j|^3.
\ene
Denote by $\tilde Y$ the matrix $Y$ with both $Y_{ij}$ and $Y_{ji}$ replaced by $0$. Assume that $X',Y'$ satisfy the conclusion of Theorem $\ref{th_main}$ for the matrices $X,\tilde Y$. Then, they also satisfy the same conclusion for the matrices $X,Y$.
\end{lem}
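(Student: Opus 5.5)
Since $X\in \B_d^{\diag}$, the commutator is computed entrywise: $[X,Y]_{kl}=(x_k-x_l)Y_{kl}$. Writing $a:=|x_i-x_j|$, $y:=|Y_{ij}|=|Y_{ji}|\le 1$, $A:=\|[X,\tilde Y]\|_2^2$ and $B:=\|[X,Y]\|_2^2$, I would start from the exact identity
$$
B=A+h,\qquad h:=2a^2y^2 .
$$
The hypothesis says, in unnormalized form,
$$
\|X-X'\|_2^2+\|\tilde Y-Y'\|_2^2\le \ctone d^{2/3}A^{1/3}.
$$
The goal is the same bound with $Y$ and $B$ in place of $\tilde Y$ and $A$. (I would also remark that the question of whether $\tilde Y\in\B_d$ is irrelevant here: only the inequality for the given pair $X',Y'$ is transferred.)

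\emph{Step 1 (cost on the left).} The matrices $Y-Y'$ and $\tilde Y-Y'$ differ only in the $(i,j)$ and $(j,i)$ entries. Hence
$$
\|Y-Y'\|_2^2-\|\tilde Y-Y'\|_2^2=2\bigl(|Y_{ij}-Y'_{ij}|^2-|Y'_{ij}|^2\bigr)\le 2y^2+4y|Y'_{ij}|.
$$
Alternatively, by the triangle inequality, $\|Y-Y'\|_2\le \|\tilde Y-Y'\|_2+\sqrt2\,y$. So the left-hand side grows by at most $2y^2+2\sqrt2\,y\,\sqrt{S}$, where $S\le \ctone d^{2/3}A^{1/3}$.

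\emph{Step 2 (gain on the right).} By concavity of $t\mapsto t^{1/3}$ and $A\le B$,
$$
(A+h)^{1/3}-A^{1/3}\ge \tfrac13 h B^{-2/3}.
$$
Condition \eqref{eq_lemma_delete} reads $B^{2/3}\le \ctone d^{2/3}a^2/6$, which gives
$$
\ctone d^{2/3}\bigl(B^{1/3}-A^{1/3}\bigr)\ge \frac{\ctone d^{2/3}\cdot 2a^2y^2}{3\,\ctone d^{2/3}a^2/6}=4y^2 .
$$
This is exactly where the constant $6^{3/2}$ in \eqref{eq_lemma_delete} is calibrated: the gain $4y^2$ dominates the pure term $2y^2$ from Step 1.

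\emph{Step 3 (the cross term, main obstacle).} It remains to absorb the cross term $2\sqrt2\,y\sqrt S$ (or $4y|Y'_{ij}|$). I would split into cases according to the relative sizes of $h$ and $A$.

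If $h\ge A$, then $B\le 2h$, and I would not linearize. Instead I would use the full right-hand side $\ctone d^{2/3}B^{1/3}$ directly. The left-hand side is at most $(\sqrt S+\sqrt2 y)^2\le 2S+4y^2$. Here $S\le \ctone d^{2/3}A^{1/3}\le \ctone d^{2/3}h^{1/3}$, and $y^2$ is controlled through $h=2a^2y^2$ together with $a^2\ge 6B^{2/3}/(\ctone d^{2/3})$.

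If $h<A$, then $B\le 2A$, so the concavity gain improves to a multiple of $hA^{-2/3}$. I would bound the cross term by AM--GM as
$$
2\sqrt2\,y\sqrt S\le \varepsilon S+2\varepsilon^{-1}y^2 .
$$
The $y^2$ piece is then covered by the gain, with $\varepsilon$ chosen to exploit the slack in the factor $4$ versus $2$. The $\varepsilon S$ piece I would absorb using $S\lesssim \ctone d^{2/3}A^{1/3}$ and the fact that the gain scales like $A^{1/3}\cdot (h/A)$.

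I expect this balancing of constants to be the delicate point, and possibly to force using $\|Y'\|\le 1$ (so $|Y'_{ij}|\le1$, $y\le 1$) or a slightly different bookkeeping of the constant $6^{3/2}$. Once the unnormalized inequality $\|X-X'\|_2^2+\|Y-Y'\|_2^2\le \ctone d^{2/3}B^{1/3}$ is established, dividing by $d$ and using \eqref{eq_norms} gives \eqref{eq_th_main} for $X,Y$.
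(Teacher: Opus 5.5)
\textbf{There is a genuine gap in Step 3.} Your Steps 1 and 2 are correct. Step 2 is essentially the paper's computation: the paper bounds $B^{2/3}+B^{1/3}A^{1/3}+A^{2/3}\le 6B^{2/3}$ and so only gets a gain of $2|Y_{ij}|^2$, whereas your concavity bound gives $4|Y_{ij}|^2$. Step 3, however, cannot be completed by better bookkeeping. For an arbitrary admissible $Y'$ the cross term cannot be absorbed, because the implication is then false.

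\textbf{Why the cross term is fatal.} Suppose $i,j$ lie in one eigenspace of $X'$ and $Y'_{ij}=-s\,Y_{ij}/|Y_{ij}|$ with $s$ of order one. Passing from $\tilde Y$ to $Y$ raises the left-hand side by $2y^2+4sy$. The right-hand side rises by at most $\ctone d^{2/3}h/(3A^{2/3})$, which is $O(y^2)$ as $y\to0$. So if the hypothesis for $\tilde Y$ holds with equality, the conclusion fails for small $y$.

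\textbf{A concrete counterexample.} Take $\ctone=24$, $d=3$, $X=\diag(-1,0,1)$, and $Y=y(E_{12}+E_{21})+u(E_{13}+E_{31})$, where $E_{kl}$ are the matrix units, with $(i,j)=(1,2)$. Let $X'=\diag(-\frac12,-\frac12,1)$ and $Y'=-(E_{12}+E_{21})$. Choose $u\approx 4\cdot10^{-3}$ solving $\frac52+2u^2=\ctone d^{2/3}(8u^2)^{1/3}$, so the hypothesis holds with equality. Then \eqref{eq_lemma_delete} holds trivially, since $B\le 24$. The left side grows by $4y+2y^2$, while the right side grows by at most about $1.3\cdot10^4\,y^2$. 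Hence the conclusion fails for $y=10^{-4}$.

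The same obstruction appears in your case $h<A$. When $S$ is near its maximum, absorbing $\varepsilon S$ into a gain of size at most $\ctone d^{2/3}h/(3A^{2/3})$ forces $\varepsilon\le h/(3A)$. Then $2\varepsilon^{-1}y^2\ge 3A/a^2$, which does not even tend to $0$ as $y\to0$, whereas the gain does.

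\textbf{The missing idea: arrange $Y'_{ij}=Y'_{ji}=0$.} This removes the cross term entirely. The paper does this tacitly. Its identity $\|Y'-Y\|_2^2=\|Y'-\tilde Y\|_2^2+\|\tilde Y-Y\|_2^2$ is Pythagoras, and it holds exactly when $\mathrm{Re}(\overline{Y_{ij}}Y'_{ij})=0$. In particular it holds for the minimizer $Y'=\sum_kP_k\tilde YP_k$ of Lemma \ref{lemma_reduction}, which is the $Y'$ actually produced, because $\tilde Y_{ij}=0$. The extra cost is then exactly $2|Y_{ij}|^2$, and your Step 2 finishes the proof.

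\textbf{A version valid for arbitrary $(X',Y')$.} Keep $X'$ and replace $Y'$ by the pinching $Y''=\sum_kP_kYP_k$ over the spectral projections of $X'$. Then $Y''\in\B_d$, since pinching is contractive and $\|Y\|\le1$, and $[X',Y'']=0$. Since $P_kY'P_l=0$ for $k\neq l$, we get $\|Y-Y''\|_2^2=\sum_{k\neq l}\|P_kYP_l\|_2^2\le\|\tilde Y-Y'\|_2^2+2|Y_{ij}|^2$. Step 2 then gives the conclusion for $(X',Y'')$. This "same $X'$, modified $Y'$" statement is all that Corollary \ref{cor_gaps} needs.
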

\begin{proof}
Since $\|[X,\tilde Y]\|_2\le \|[X,Y]\|_2$, we have
\begin{multline*}
2|Y_{ij}|^2(x_i-x_j)^2=\|[X,Y]\|^2_2-\|[X,\tilde Y]\|^2_2=\\=\l(\|[X,Y]\|^{2/3}_2-\|[X,\tilde Y]\|^{2/3}_2\r)\l(\|[X,Y]\|^{4/3}_2+\|[X,Y]\|^{2/3}_2\|[X,\tilde Y]\|_2^{2/3}+\|[X,\tilde Y]\|_2^{4/3}\r)\le\\
\le 6 \l(\|[X,Y]\|^{2/3}_2-\|[X,\tilde Y]\|^{2/3}_2\r)\|[X,Y]\|^{4/3}_2.
\end{multline*}
Together with \eqref{eq_lemma_delete}, we get
\bee
\label{eq_delete_1}
2|Y_{ij}|^2\le \frac{6\l(\|[X,Y]\|^{2/3}_2-\|[X,\tilde Y]\|^{2/3}_2\r)\|[X,Y]\|^{4/3}_2}{(x_i-x_j)^{2}}\le C_{\mathrm{t1}}d^{2/3}\l(\|[X,Y]\|^{2/3}_2-\|[X,\tilde Y]\|^{2/3}_2\r).
\ene
By combining the assumptions of the theorem, the relation between $Y$ and $\tilde Y$, and \eqref{eq_delete_1}, we finally arrive to
\begin{multline*}
\|X-X'\|_2^2+\|Y'-Y\|_2^2=\|X-X'\|_2^2+\|Y'-\tilde Y\|_2^2+\|\tilde Y-Y\|_2^2\le \\ \le C_{\mathrm{t1}} d^{2/3}\l\|[X,\tilde Y]\r\|_2^{2/3}+2\l|Y_{ij}\r|^2\le C_{\mathrm{t1}}d^{2/3}\|[X,Y]\|_2^{2/3},
\end{multline*}
which completes the proof after multiplying both sides by $d^{-1}$.
\end{proof}
\begin{rmk}
\label{rem_iterated_delete}
Since $\|[X,\tilde Y]\|_2\le \|[X,Y]\|_2$ in the above proof, deleting a matrix element $Y_{ij}$ would not reverse the inequality \eqref{eq_lemma_delete} for any other pairs of indices for which it was true originally.
\end{rmk}
As a consequence, we can assume without loss of generality that the spectrum of $X$ does not contain large gaps, as follows.
\begin{cor}
\label{cor_gaps}
Suppose that the statement of Theorem $\ref{th_main}$ has been obtained under the additional assumptions
$$
X=\diag\{x_1,\ldots,x_d\},\quad -1\le x_1<\ldots< x_d\le 1;\quad x_{b+1}-x_b < \sqrt{\frac{6}{C_{\mathrm{t1}}}}\|[X,Y]\|_\nf^{2/3},\quad\text{for}\quad b=1,\ldots,d-1.
$$
Then the statement of Theorem $\ref{th_main}$ also holds in full generality.
\end{cor}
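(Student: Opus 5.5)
The plan is a strong induction on the dimension $d$. The base case $d=1$ is trivial: take $X'=X$ and $Y'=Y$. Fix $d$, and assume that Theorem \ref{th_main} holds in full generality in all dimensions smaller than $d$.

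\textbf{Preliminary reductions.} By the unitary invariance discussed at the beginning of Section 2, we may assume $X=\diag\{x_1,\ldots,x_d\}$ with $x_1\le\ldots\le x_d$. By approximation we may further assume the $x_i$ are distinct. This approximation is harmless: both sides of \eqref{eq_th_main} are continuous in $(X,Y)$, and the set of admissible pairs $(X',Y')$ is compact, so a limit of admissible pairs for the perturbed $X$ gives an admissible pair for $X$ itself. If every gap satisfies $x_{b+1}-x_b<\sqrt{6/\ctone}\,\|[X,Y]\|_\nf^{2/3}$, we are in the assumed special case and there is nothing to prove.

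\textbf{Deleting the off-diagonal block.} Otherwise there is some $b$ with $x_{b+1}-x_b\ge \sqrt{6/\ctone}\,\|[X,Y]\|_\nf^{2/3}$. Then for every $i\le b<j$ we have
\[
|x_i-x_j|^3\ge (6/\ctone)^{3/2}\|[X,Y]\|_\nf^{2}.
\]
Multiplying by $\ctone^{3/2}d/6^{3/2}$ gives exactly \eqref{eq_lemma_delete}, because $\|[X,Y]\|_2^2=d\|[X,Y]\|_\nf^2$.

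We then delete the entries $Y_{ij},Y_{ji}$ with $i\le b<j$ one pair at a time. By Remark \ref{rem_iterated_delete}, each deletion only decreases $\|[X,\cdot]\|_2$. Hence \eqref{eq_lemma_delete}, whose right-hand side does not change, remains valid for every remaining pair. We run through the finite chain $Y=Y^{(0)},Y^{(1)},\ldots,Y^{(m)}=\tilde Y$ and apply Lemma \ref{lemma_delete} backwards along it. If the conclusion of Theorem \ref{th_main} holds for $(X,\tilde Y)$, it then holds for $(X,Y)$.

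There is one point to check here. The proof of Lemma \ref{lemma_delete} never uses $\|Y\|\le 1$ for the intermediate matrices; it only uses the conclusion for the deleted matrix. So the intermediate $Y^{(k)}$ need not lie in $\B_d$. What matters is that the final matrix does. Let $P$ be the projection onto the span of $e_1,\ldots,e_b$ and $Q=\bone-P$. Then $\tilde Y=PYP+QYQ$ is a pinching of $Y$, hence self-adjoint with $\|\tilde Y\|\le 1$.

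\textbf{Splitting and induction.} Now $X=X_-\oplus X_+$ and $\tilde Y=\tilde Y_-\oplus\tilde Y_+$ with respect to $\mathrm{Ran}\,P\oplus\mathrm{Ran}\,Q$. Both summands have dimension smaller than $d$, and all four blocks lie in the corresponding unit balls. By the induction hypothesis, Theorem \ref{th_main} holds for each summand in full generality. Note that the gap condition on a block refers to that block's own normalized commutator, which is why we need the full statement in lower dimensions rather than only the special case. Lemma \ref{lemma_direct_sum} then assembles the two conclusions into the conclusion for $(X,\tilde Y)$, and the deletion step transfers it to $(X,Y)$. The resulting $X'$ is diagonal, so it commutes with $X$, as required.

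The main obstacle is bookkeeping rather than analysis. Two things must be checked carefully. First, iterating Lemma \ref{lemma_delete} remains legitimate as the commutator shrinks; this is exactly the content of Remark \ref{rem_iterated_delete}. Second, the gap threshold, which is stated in terms of the normalized norm, converts precisely into \eqref{eq_lemma_delete}, which is stated in terms of the unnormalized norm.
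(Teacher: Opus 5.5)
Your proposal is correct and follows the paper's proof. A large gap makes \eqref{eq_lemma_delete} hold for every pair across it, so Lemma \ref{lemma_delete}, iterated via Remark \ref{rem_iterated_delete}, removes the off-diagonal block; Lemma \ref{lemma_direct_sum} and induction on $d$ then finish. Your extra checks fill in details the paper leaves implicit: the intermediate $Y^{(k)}$ may leave $\B_d$ harmlessly while $\tilde Y=PYP+QYQ$ stays in it, and the induction must be on the full statement. One more point your inspection of Lemma \ref{lemma_delete} should record: the Pythagorean identity $\|Y'-Y\|_2^2=\|Y'-\tilde Y\|_2^2+\|\tilde Y-Y\|_2^2$ in its proof needs $Y'_{ij}=0$, which holds here because your $Y'=Y'_-\oplus Y'_+$ is block diagonal.
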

\begin{proof}
The assumption of $X$ being diagonal with distinct eigenvalues has already beed discussed in the beginning of Section 2.

Suppose now that, for some $a$, we have
$$
x_{b+1}-x_b\ge \sqrt{\frac{6}{C_{\mathrm{t1}}}}\|[X,Y]\|^{2/3}_\nf,
$$
In that case, one also has
$$
x_{i}-x_j\ge \sqrt{\frac{6}{C_{\mathrm{t1}}}}\|[X,Y]\|^{2/3}_\nf,\quad \text{for}\quad i\ge b+1>b\ge j.
$$
From Lemma \ref{lemma_delete}, one can assume without loss of generality that $Y_{ij}=0$ for all $i,j$ as above. However, in this case we fall within the assumptions of Lemma \ref{lemma_direct_sum} and can therefore proceed by induction.
\end{proof}
\begin{rmk}
\label{rem_dimension_multiple}
Lemma \ref{lemma_direct_sum} applies to the setting of \ref{th_main_multi1} assuming that the subspace decomposition is the same for each pair $(X,Y_1),\ldots,(X,Y_N)$. Note that the $\pm$ notation is chosen in order to avoid conflict with the notation in Theorem \ref{th_main_multi1}.

Lemma \ref{lemma_delete} can be applied to each matrix $Y_j$ individually, in view of Remark \ref{rem_iterated_delete}. However, the dependence on $Y_j$ would not allow to apply Lemma \ref{lemma_direct_sum} in the end of the proof of Corollary \ref{cor_gaps} verbatim. For the purpose of Theorem \ref{th_main_multi1}, an appropriate modification is:
\bee
\label{eq_replacement}
\text{replace}\quad \|[X,Y]\|_\nf \quad\text{by}\quad\l(\|[X,Y_1]\|^2_\nf+\ldots+\|[X,Y_N]\|^2_\nf\r)^{1/2}
\ene
in the statements of Lemma \ref{lemma_delete} and Corollary \ref{cor_gaps}.
\end{rmk}
\section{Proof of Theorem \ref{th_main}}
We will use $C_{\mathrm{t1}}$ in some of the estimates, and will assign to it a numerical value 12 retrospectively. As earlier in Lemma \ref{lemma_reduction} and Corollary \ref{cor_gaps}, we assume
$$
X=\diag\{x_1,\ldots,x_d\},\quad -1=x_1<\ldots< x_d=1.
$$
Note that the equalities above can always be imposed by shift and rescaling of $X$. Recall that, by Lemma \ref{lemma_reduction}, the best commuting approximants can be explicitly found once one fixes the breaking points:
$$
X'=\diag\{t_1,\ldots,t_d\},\quad -1\le t_1\le\ldots\le t_d\le 1;
$$
$$
t_1=\ldots=t_{b_1}<t_{b_1+1}=\ldots=t_{b_2}<\ldots<t_{b_{\ell-1}+1}=\ldots=t_{b_\ell}.
$$
Our improvement over \cite{FK} will be that the choice of breaking points will not only depend on $\{x_1,\ldots,x_d\}$, but also on $Y$. For $b\in \{1,\ldots,d\}$, define
\bee
\label{eq_cost_function}
\zeta_b:=2\sum_{i=1}^b\sum_{j=b+1}^d |Y_{ij}|^2
\ene
be the $\|\cdot\|_2^2$-cost of declaring $b$ a breaking point. In order to optimize the costs, the breaking points will be chosen in two steps. First, let
\bee
\label{eq_r_def}
\ell:=2\l\lceil\|[X,Y]\|_\nf^{1/3}\r\rceil^{-1},\quad r:=2(\ell-1)^{-1}.
\ene
and choose tentative breaking points $a_1,\ldots,a_{\ell-1}$ by splitting $[-1,1]$ into $\ell-1$ intervals of equal length and grouping $x_1,\ldots,x_d$ accordingly; in other words,
$$
x_1,\ldots,x_{a_1}\in -1+[0,r];
$$
$$
x_{a_1+1},\ldots,x_{a_2}\in -1+[r,2r];
$$
$$
\ldots
$$
$$
x_{b_{\ell-1}+1},\ldots,x_{b_\ell}=x_d\in -1+[(\ell-2)r,(\ell-1) r]=[1-r,1].
$$
The actual breaking points will be chosen by minimizing the cost function \eqref{eq_cost_function} over each of the intervals determined by tentative breaking points. That is, denoting for convenience $a_0:=0$,
\bee
\label{eq_def_aj}
b_j:=\arg\min\l\{\zeta_k\colon a_{j-1}+1\le k\le a_j\r\},\quad j=1,2,\ldots,\ell-1;\quad b_{\ell}:=d.
\ene
Clearly,
\bee
\label{eq_zeta_restatement}
\|Y-Y'\|_2^2=\sum_{j=1}^{\ell-1}\zeta_{b_j}.
\ene
In order to estimate the latter note, recall that, from Corollary \ref{cor_gaps}, we have assumed
$$
x_{b+1}-x_{b}< \sqrt{\frac{6}{C_{\mathrm{t1}}}}\|[X,Y]\|_\nf^{2/3}=:g,\quad \text{for}\quad b=1,2,\ldots,d-1;
$$
in other words, the length of every spectral gap of $X$ must be less than $g$. As a consequence, assuming, say, that $\ctone\ge 24$ so that $2g\le r$, we obtain
\bee
\label{eq_weights}
2\sum_{k=a_j+1}^{a_{j+1}}\frac{x_{k+1}-x_k}{r}=\frac{2(x_{a_{j+1}+1}-x_{a_j+1})}{r}\ge \frac{2(r-g)}{r}\ge 1,\quad \text{for}\quad j=0,\ldots,\ell-1.
\ene
Indeed, a violation of the second to last inequality would lead to a large gap between $x_{a_{j+1}}$ and $x_{a_{j+1}+1}$.

From \eqref{eq_def_aj}, we have that $\zeta_{b_j}$ cannot exceed any convex combination of $\zeta(k)$ from the right hand side of \eqref{eq_def_aj}, and the same applies to a linear combination with the coefficients from the left hand side of \eqref{eq_weights}. Thus, we have
$$
\zeta_{b_j}\le \frac{2}{r}\sum_{k=a_{j-1}+1}^{a_{j}}(x_{k+1}-x_k)\,\zeta_k,\quad j=1,\ldots,\ell-1.
$$
By combining with \eqref{eq_zeta_restatement} and recalling $a_\ell=d$, we then use \eqref{eq_cost_function} and expand
\begin{multline}
\label{eq_zeta_restatement_2}
\|Y-Y'\|_2^2\le \frac{2}{r}\sum_{k=1}^d (x_{k+1}-x_k)\zeta_k=\frac{4}{r}\sum_{k=1}^d (x_{k+1}-x_k)\sum_{i=1}^k\sum_{j=k+1}^d|Y_{ij}|^2=\\=\frac{4}{r}\sum_{i=1}^d\sum_{j=i+1}^d\l(\sum_{k=i}^{j-1}(x_{k+1}-x_k)\r)|Y_{ij}|^2=\frac{4}{r}\sum_{i=1}^d\sum_{j=i+1}^d(x_j-x_i)|Y_{ij}|^2.
\end{multline}
From the Cauchy -- Schwartz inequality applied to the right hand side, we now obtain
\bee
\label{eq_cauchy_schwartz}
\|Y-Y'\|_2^2\le \frac{4}{r}\l(\sum_{i=1}^d\sum_{j=i+1}^d(x_j-x_i)|Y_{ij}|^2\r)^{1/2}\l(\sum_{i=1}^d\sum_{j=i+1}^d|Y_{ij}|^2\r)^{1/2}\le \frac{2}{r}\|[X,Y]\|_2\|Y\|_2.
\ene
Since $\|Y\|_\nf\le \|Y\|\le 1$, we can multiply by $d^{-1}$ and recall the definition of $r$ \eqref{eq_r_def}, arriving to
\bee
\label{eq_conclusion_1}
\|Y-Y'\|_\nf^2\le \frac{2}{r}\|[X,Y]\|_\nf\le 4\|[X,Y]\|_\nf^{2/3}.
\ene
On the other hand, each diagonal entry $t_j$ of $X'$ is located in the same interval of length $r$ as the corresponding diagonal entry $x_j$ of $X$. Recalling the definition of $r$ \eqref{eq_r_def}, we have.
\bee
\label{eq_conclusion_2}
\|X-X'\|_\nf^2\le \|X-X'\|\le r\le 2\|[X,Y]\|^{1/3}_\nf.
\ene
The combination of \eqref{eq_conclusion_1} and \eqref{eq_conclusion_2} provides the conclusion of \ref{th_main} with $\ctone=8$. In view of the other restriction imposed earlier, Theorem \ref{th_main} holds with $\ctone=24$.\,\,\qed
\begin{rmk}
\label{rem_main_multiple}
The proof extends to the case of multiple matrices in Theorem \ref{th_main_multi1}, with the following modifications, which mostly replace every expression involving $Y$ by an appropriate summation over $Y_j$. In particular, \eqref{eq_cost_function} is replaced by
$$
\zeta_b:=\quad 2\sum_{q=1}^N\sum_{i=1}^b\sum_{j=b+1}^d |Y_{q,ij}|^2
$$
where $Y_{q,ij}$ is an element of the matrix $Y_q$. We also replace $\|[X,Y]\|_\nf$ in the same way as \eqref{eq_replacement} everywhere in the proof. In \eqref{eq_zeta_restatement_2}, we replace $Y$ and $Y'$ by $Y_q$ and $Y_q'$, respectively, and add summation over $q$ everywhere. The same is done in the left hand sides of \eqref{eq_cauchy_schwartz}, \eqref{eq_conclusion_1}, \eqref{eq_conclusion_2}, and in the summations in the middle of \eqref{eq_cauchy_schwartz}. The $2$-norms in the right hand side of \eqref{eq_cauchy_schwartz} are replaced similarly to \eqref{eq_replacement}. The upper estimate of $\|Y\|$ is replaced by that of $\l(\|Y_1\|_\nf^2+\ldots+\|Y_N\|_\nf^2\r)^{1/2}\le \sqrt{N}$, adding an extra factor $\sqrt{N}$ in the conclusion.
\end{rmk}
\section{Proof of theorem \ref{th_lower_bound}}
We first recall the original definition \eqref{eq_th_lower_bound}
$$
X_0 := \frac{1}{d} 
\begin{pmatrix}
1 &  &  &  \\
& 2 &  &  \\
&  & \ddots &  \\
&  &  & d
\end{pmatrix}
, \quad
Y_0 := 
\begin{pmatrix}
0 & 1 &  &  \\
1& 0 & \ddots &  \\
& \ddots & \ddots & 1 \\
&  & 1 & 0
\end{pmatrix}
$$
and note
$$
[X_0, Y_0] = \frac{1}{d} \begin{pmatrix}
0 & 1 &  &  \\
-1& 0 & \ddots &  \\
& \ddots & \ddots & 1 \\
&  & -1 & 0
\end{pmatrix}
$$
Clearly, $\frac{1}{d}\le \|[X_0,Y_0]\|_\nf\le \frac{2}{d}$. Let $X',Y'$ be commuting approximants satisfying the assumptions of Theorem \ref{th_lower_bound}, and $b_1,\ldots,b_\ell$ be the breaking points of $X'$. From Lemma \ref{lemma_reduction}, one can easily check (without carefully optimizing the constants)
$$
\|X_0-X'\|_2^2=\sum_{i=1}^{\ell}\sum_{j=b_{i-1}+1}^{b_i}\l(\frac{2j-(b_{i-1}+b_i+1)}{2d}\r)^2\ge \frac{1}{48 d^2}\sum_{i=1}^{\ell}(b_{i}-b_{i-1})^3\ge \frac{1}{48d^2}\frac{d^3}{\ell^2},
$$
where the last step involves the cubic mean inequality and the identity $b_\ell-b_0=d$.

On the other hand, $Y'$ is obtained from $Y$ by deleting $2(\ell-1)$ non-zero off-diagonal entries. Therefore,
$$
\|Y_0-Y'\|_2^2=2(\ell-1).
$$
By combining the above, we arrive to
$$
\|X_0-X'\|_2^2+\|Y_0-Y'\|_2^2\ge \frac{d}{48\ell^2}+\ell\ge \frac{1}{48}d^{1/3}\ge \frac{1}{96} d\|[X_d,Y_d]\|_\nf^{2/3}.
$$
After multiplying by $d^{-1}$, we arrive to the conclusion of Theorem \ref{th_lower_bound}.\qed

\end{document}